\newcommand\pder[2][]{\ensuremath{\frac{\partial#1}{\partial#2}}}
\newcommand{\Ste}{\text{Ste}}
\DeclareMathOperator\erf{erf}
 \newtheorem{thm}{Theorem}[section]
 \newtheorem{lem}[thm]{Lemma}
 \theoremstyle{definition}
 \theoremstyle{remark}
 \newtheorem{rem}[thm]{Remark}
 \numberwithin{equation}{section}
\begin{document}

\begin{frontmatter}



\title{Explicit solution for non-classical one-phase Stefan problem with variable thermal coefficients and two different heat source terms}

 \author[label1,label2]{Julieta Bollati \corref{cor1}}
 \author[label1]{Mar\'ia F. Natale}
 \author[label1]{Jos\'e A. Semitiel }
 \author[label1,label2]{Domingo A. Tarzia}
\address[label1]{Depto de Matem\'atica, FCE-Universidad Austral,  Paraguay 1950, 2000~Rosario, Argentina}
 \address[label2]{CONICET, Argentina}
\cortext[cor1]{jbollati@austral.edu.ar, (+54) 341 5223000}

\begin{abstract}
A one-phase Stefan problem for a semi-infinite material is investigated for special functional forms of the thermal conductivity and specific heat depending on the temperature of the phase-change material. Using the similarity transformation technique, an explicit solution for these situations are showed. The mathematical analysis is made for two different kinds of heat source terms, and the existence and uniqueness of the solutions are proved.
\end{abstract}

\begin{keyword}
Stefan problem\sep Temperature-dependent thermal coefficients\sep Phase-change material\sep Non-classical heat equation\sep Heat source terms\sep Explicit solution\sep Similarity method.



\end{keyword}
\end{frontmatter}

\section{Introduction.}

The phase change problems that contain one or more moving boundaries have attracted growing attention in the last decades due to their wide range of engineering,  industrial applications and natural sciences.  Stefan problems can be modelled as basic phase-change processes where the location of the interface is a priori an unknown function \cite{AlSo,Ca,CaJa,Cr,Gu,Lu,Ta2011}.

The present study provides the existence and uniqueness of solution of the similarity type to a one-phase Stefan fusion problem for a semi-infinite material where it is assumed a Dirichlet condition at the fixed face $x=0$ and it is governed by a non-classical and nonlinear heat equation with temperature-dependent thermal conductivity and specific heat coefficients, and two different kinds of heat source terms.

Non-classical heat conduction problem are considered when the source term is linear or nonlinear depending of the heat flux or the temperature on the boundary of the domain according to the corresponding boundary condition imposed. The non-classical problem are motivated by the modelling of a system of temperature regulation in isotropic media and the source term describes a cooling or a heating effect depending of different types of sources which are related to the evolution of the unknown boundary condition on the boundary of the domain. Problems of this type are related to the thermostat problem \cite{CaYi1989, FrJi1988, FuMiKe1996, GlSp1981, GlSp1982, Ke1990, KePr1988}. For example, we will use mathematical ideas developed for the one-dimensional case in \cite{BeTaVi2000, CeTaVi2015, SaTaVi2011, TaVi1998, Vi1986} and for the n-dimensional case in \cite{BoTa2014, BoTa2017, BoTa2020}. The first paper connecting the non-classical heat equation with a phase-change process (i.e. the Stefan problem) was \cite{BrTa2006a} and after this some other works on the subject were published, for example \cite{BoKh2021, BrNa2014, BrNa2019, BrTa2006b, BrTa2010a}. Moreover, in \cite{BrTa2010b} explicit solutions for the non-classical one-phase Stefan problem was given for cases corresponding to different boundary conditions on the fixed face $x=0$: temperature, heat flux and convective boundary condition.

The mathematical model of the governing phase-change process is described as follows:
\begin{align}
& \rho c(\theta) \pder[\theta]{t}=\frac{\partial}{\partial x} \left(k(\theta)\pder[\theta]{x} \right)-H, & 0<x<s(t), \quad t>0, \label{EcCalor}\\
&  \theta(0,t)=\theta_{_0}>\theta_f &t>0, \label{CondBorde}\\
&  \theta(s(t),t)=\theta_{f}, &t>0, \label{TempCambioFase}\\
&  k_0\pder[\theta]{x}(s(t),t)=-\rho l \dot s(t), &t>0, \label{CondStefan}\\
& s(0)=0,\label{FrontInicial}
\end{align}
where the unknown functions are the temperature $\theta=\theta(x,t)$ and the free boundary $x=s(t)$ separating both phases (the liquid phase at temperature $\theta(x,t)$ and the solid phase at constant temperature $\theta_{f}$ ). The parameters $\rho>0$ (density), $l>0$ (latent heat per unit mass), $\theta_{0}>0$ (temperature imposed at the fixed face $x=0$) and $\theta_f$ (phase change temperature at the free boundary $x=s(t)$) are all known constants.

If the thermal coefficients of the material are temperature-dependent, we have a doubly non-linear free boundary problem.  The functions $k$ and $c$ are defined as:
\begin{align}
&k(\theta)=k_{0}\left(1+\delta\left(\tfrac{\theta-\theta_{f}}{\theta_{0}-\theta_{f}}\right)^{p}\right)\label{k}\\
&c(\theta)=c_{0}\left(1+\delta\left(\tfrac{\theta-\theta_{f}}{\theta_{0}-\theta_{f}}\right)^{p}\right)\label{c},
\end{align}
where $\delta$ and $p$ are given non-negative constants, $k_{0}=k\left(\theta_f\right)$ and $c_{0}=c\left(\theta_f\right)$  are the reference thermal conductivity and the specific heat coefficients, respectively.

Some other models involving temperature-dependent thermal conductivity can also be found in \cite{BoNaSeTaLibro, BoNaSeTa-ThSci,BrNa3, BrNa, CST, CeSaTa2020,ChCh, ChSu74,  KSR, Ma, NaTa, NaTa2003, OlSu87, Ro85, Ro15, Ro18}.

Existence and uniqueness to the problem (\ref{EcCalor})-(\ref{FrontInicial}) with null source term, $H=0$, was developed in \cite{BoNaSeTa2020}.

The control function $H$ represents a heat source term for the nonlinear heat equation. Several applied papers give us the significance of the source term in the interior of the material which can undergo a change of phase \cite{BoTa2000, BrNaTa2007, Sc1994, TaVi1998, Vi1986}.  In this paper we considered two different control functions $H$. The first one is defined as in \cite{BrNaTa2007, BrTa2006b, MeTa1993} and the second one depends on the evolution of the heat flux at the fixed face $x=0$ like in \cite{BrNa2014, BrNa2019}. In this last case, we have a non-classical heat equation as in \cite{TaVi1998, Vi1986}.

We are interested in obtaining a similarity solution to problem (\ref{EcCalor})-(\ref{FrontInicial}) in which the temperature $\theta=\theta(x,t)$ can be written as a function of a single variable.  Through the following change of variables:
\begin{equation}
y(\eta)=\tfrac{\theta(x,t)-\theta_{f}}{\theta_{0}-\theta_{f}}\geq 0  \label{Y}
\end{equation}
with
\begin{equation}
\eta=\tfrac{x}{2a\sqrt{t}},\quad 0<x<s(t),\quad t>0, \label{eta}
\end{equation}
the phase front moves as
\begin{equation}
s(t)=2a\lambda\sqrt{t} \label{freeboundary}
\end{equation}
where $a^{2}=\frac{k_{0}}{\rho c_{0}}$ (thermal diffusivity) and $\lambda>0$ is a positive parameter to be determined.

The plan of this paper is the following. In Section 2, we prove the existence and uniqueness of solution to the problem (\ref{EcCalor})-(\ref{FrontInicial}) considering the control function given by \cite{Sc1994}:
\begin{equation}
H=H_1(x,t)=\frac{\rho l}{t}\beta\left(\frac{x}{2a\sqrt{t}}\right) \label{H1},
\end{equation}
where $\beta=\beta(\eta)$ in a function with appropriate regularity properties \cite{BoTa2000, BrNaTa2007, MeTa1993, Sc1994}.  Moreover, a particular case where $\beta$ is of exponential type given by
\begin{equation}
\beta(\eta)=\dfrac{1}{2}\exp(-\eta^2),
\end{equation}
is also studied in details. This type of heat source term is important through the use of microwave energy following \cite{Sc1994}.

Finally,  in Section 3, we prove existence and uniqueness of solution to the problem (\ref{EcCalor})-(\ref{FrontInicial}) considering the control function given by
\begin{equation}
H=H_2(t)=\frac{\lambda_0 }{\sqrt{t}}\frac{\partial T}{\partial x}(0,t)
\end{equation}
that can be thought of  by modelling of a system of temperature regulation in isotropic mediums \cite{BrNa2014, BrNa2019, BrTa2006b} with nonuniform source term, which provides a cooling or helting effect depending upon the properties of $H_2$ related to the heat flux (or the temperature in other cases) at the fixed face boundary $x=0$.

\section{Free boundary problem when the heat source term is of a similarity type}
We consider now the control function $H$ given by (\ref{H1}).


\subsection{General case}
We consider the hypothesis:
\\
$H_\beta$: $\beta=\beta(\eta)$ is an integrable function in $\left(0,\epsilon\right)$ for all $\epsilon >0$ and $\beta(\eta)\exp\left(\eta^2\right)$ is an integrable function in $\left(M,+\infty\right)$ for all $M>0$ \label{Hbeta}.

It is easy to see that the Stefan problem $\mathrm{(}$\ref{EcCalor}$\mathrm{)}$-(\ref{FrontInicial}) has a similarity solution $(\theta,s)$ given by:
\begin{align}
&\theta(x,t)=\left(\theta_{0}-\theta_{f}\right)y\left(\tfrac{x}{2a\sqrt{t}}\right)+\theta_{f},\quad  0<x<s(t), \quad t>0,\label{T1} \\
&s(t)=2a\lambda\sqrt{t},\quad\quad t>0\label{s1}
\end{align}
if and only if the function $y$ (defined by (\ref{Y})) and the parameter $\lambda>0$ (defined in (\ref{freeboundary})) satisfy the following ordinary differential problem:
\begin{align}
&2\eta(1+\delta y^p(\eta))y'(\eta)+[(1+\delta y^p(\eta))y'(\eta)]'=\tfrac{4}{\Ste}\beta(\eta), \quad &0<\eta<\lambda, \label{y1}\\
&y(0)=1,\label{cond01}\\
&y(\lambda)=0, \label{condlambda1}\\
&y'(\lambda)=-\tfrac{2\lambda}{\Ste} \label{eclambda1}
\end{align}
where $\delta > 0$, $p> 0$ and $\Ste=\tfrac{c_{0}(T_{0}-T_f)}{l}>0$ is the Stefan number.

\begin{lem}\label{ProbAux1}
Fixed $p > 0$, $\delta > 0$, $\lambda>0$, $y\in C^{\infty}[0,\lambda]$, $y\geq 0$, and $\beta=\beta(\eta)$  verifies the hypothesis $H_\beta$.


Then, $(y,\lambda)$ is a solution to the ordinary differential equation (\ref{y1})-(\ref{eclambda1}) if and only if  $\lambda>0$ is a solution to the equation:
\begin{align}\label{7-1}
&\tfrac{\sqrt{\pi}}{{\Ste}} x \erf(x) \exp(x^2) +\tfrac{2\sqrt{\pi}}{\Ste} \int_{0}^{x} \exp(\xi^2)\erf(\xi)\beta(\xi)\, d\xi =1+\tfrac{\delta}{p+1}, \quad x>0,
\end{align}
and the function $y=y(\eta)$ satisfies the functional equation:
\begin{align}\label{8-1}
y(\eta)\left(1+\tfrac{\delta}{p+1}y^p(\eta)\right)&=1+\tfrac{\delta}{p+1}-\tfrac{\sqrt{\pi} \erf(\eta)}{\mathrm{Ste}}\left(2\int_{0}^{\lambda} \beta(\xi) \exp(\xi^2)\, d\xi - \lambda \exp(\lambda^2)\right)+\nonumber\\
&+\tfrac{2\sqrt{\pi}}{\Ste}\left( \int_{0}^{\eta} \beta(\xi) \exp(\xi^2) \left( \erf(\eta) - \erf(\xi)\right)\, d\xi\right), \ 0 \leq \eta \leq \lambda
\end{align}
 \end{lem}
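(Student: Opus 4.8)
The plan is to linearize the doubly nonlinear equation (\ref{y1}) by the substitution that turns the nonlinear flux term into an exact derivative. Define $\Phi(y)=y+\tfrac{\delta}{p+1}y^{p+1}$, so that $\Phi'(y)=1+\delta y^p$ and hence $(1+\delta y^p(\eta))y'(\eta)=\tfrac{d}{d\eta}\Phi(y(\eta))$. Setting $z(\eta)=\Phi(y(\eta))$, equation (\ref{y1}) collapses to the \emph{linear} equation $z''(\eta)+2\eta z'(\eta)=\tfrac{4}{\Ste}\beta(\eta)$, and the data transfer cleanly: $y(0)=1$ gives $z(0)=\Phi(1)=1+\tfrac{\delta}{p+1}$; $y(\lambda)=0$ gives $z(\lambda)=\Phi(0)=0$; and, since $y(\lambda)=0$, the Stefan condition (\ref{eclambda1}) reads $z'(\lambda)=(1+\delta\cdot 0)\,y'(\lambda)=-\tfrac{2\lambda}{\Ste}$. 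Because $\Phi$ is a smooth strictly increasing bijection of $[0,\infty)$ onto itself (as $\Phi'>0$ there), this change of unknown loses no information, so the equivalence can afterwards be read off in either direction.

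Next I would integrate the linear equation explicitly. Writing $u=z'$ reduces matters to $u'+2\eta u=\tfrac{4}{\Ste}\beta$, whose integrating factor is $\exp(\eta^2)$; this gives
\[
z'(\eta)=\exp(-\eta^2)\left(z'(0)+\tfrac{4}{\Ste}\int_0^\eta \beta(\xi)\exp(\xi^2)\,d\xi\right).
\]
Hypothesis $H_\beta$ guarantees convergence of every integral appearing here and below. Evaluating at $\eta=\lambda$ and using $z'(\lambda)=-\tfrac{2\lambda}{\Ste}$ fixes the free constant $z'(0)=-\tfrac{2\lambda}{\Ste}\exp(\lambda^2)-\tfrac{4}{\Ste}\int_0^\lambda \beta(\xi)\exp(\xi^2)\,d\xi$. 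Integrating $z'$ once more from $0$ to $\eta$, using $z(0)=1+\tfrac{\delta}{p+1}$, the identity $\int_0^\eta\exp(-s^2)\,ds=\tfrac{\sqrt{\pi}}{2}\erf(\eta)$, and Fubini's theorem in the double integral,
\[
\int_0^\eta \exp(-s^2)\int_0^s \beta(\xi)\exp(\xi^2)\,d\xi\,ds=\tfrac{\sqrt{\pi}}{2}\int_0^\eta \beta(\xi)\exp(\xi^2)\big(\erf(\eta)-\erf(\xi)\big)\,d\xi,
\]
produces exactly the functional equation (\ref{8-1}) after inserting $z'(0)$ and recalling $z=\Phi(y)=y\big(1+\tfrac{\delta}{p+1}y^p\big)$.

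Finally I would impose the one condition not yet used, $z(\lambda)=0$. Setting $\eta=\lambda$ in the formula for $z(\eta)$, the two terms proportional to $\erf(\lambda)\int_0^\lambda\beta(\xi)\exp(\xi^2)\,d\xi$ cancel against the $\erf(\eta)$--part of the Fubini integral, leaving precisely equation (\ref{7-1}) with $x=\lambda$; this proves the forward implication. For the converse I would take $\lambda>0$ solving (\ref{7-1}) and define $y$ on $[0,\lambda]$ implicitly through (\ref{8-1}): since $\Phi$ is a smooth strictly increasing bijection, the relation $\Phi(y(\eta))=(\text{right-hand side of }(\ref{8-1}))$ determines a unique $y(\eta)\ge 0$ inheriting $C^\infty$ regularity from $\Phi^{-1}$ and the smooth right-hand side. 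Differentiating (\ref{8-1}) twice — differentiation under the integral sign being justified by $H_\beta$ — recovers (\ref{y1}); $y(0)=1$ and $y(\lambda)=0$ follow by evaluating (\ref{8-1}) at the endpoints (the latter invoking (\ref{7-1})), and $y'(\lambda)=-\tfrac{2\lambda}{\Ste}$ follows from the closed form of $z'(\lambda)$.

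The conceptual crux, and the only real obstacle, is spotting the exact-derivative substitution $z=\Phi(y)$ that reduces the doubly nonlinear equation to a solvable linear one; after that the forward direction is bookkeeping. The genuinely delicate points are the convergence and differentiation-under-the-integral justifications furnished by $H_\beta$, and, in the converse direction, verifying that the right-hand side of (\ref{8-1}) stays within the range of $\Phi$ on $[0,1]$ so that a nonnegative, admissible $y$ is indeed well defined.
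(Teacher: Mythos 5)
Your proposal is correct and follows essentially the same route as the paper: your substitution $z=\Phi(y)$ satisfies $z'=(1+\delta y^p)y'$, which is exactly the auxiliary function $v$ the paper introduces, and the subsequent steps (integrating factor $\exp(\eta^2)$, evaluation at $\eta=\lambda$ to fix the constant via the Stefan condition, a second integration with the Fubini identity to get (\ref{8-1}), and $\eta=\lambda$ in (\ref{8-1}) to get (\ref{7-1})) coincide with the paper's computation. Your converse is in fact spelled out more carefully than the paper's one-line ``it follows immediately''.
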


\begin{proof}
Let $(y,\lambda)$ be a solution to (\ref{y1})-(\ref{eclambda1}). We define the function:
\begin{equation}
v(\eta)=\left(1+\delta y^{p}(\eta) \right) y'(\eta). \label{v-eta-1}
\end{equation}

Taking into account  (\ref{y1}) and the condition  (\ref{cond01})  the function $v$ can be rewrite as
\begin{equation}
v(\eta)=\exp{(-\eta^2)}\left(\tfrac{4}{\Ste}\int_{0}^{\eta} \beta(\xi)\exp(\xi^2)\, d\xi+ (1+\delta)y'(0)\right). \label{v-eta1-2}
\end{equation}

From (\ref{v-eta-1}) and (\ref{v-eta1-2}), we obtain
\begin{equation} \label{ec2-11-1}
\left(1+\delta y^{p}(\eta) \right) y'(\eta)=\exp{(-\eta^2)}\left(\tfrac{4}{\Ste}\int_{0}^{\eta} \beta(\xi)\exp(\xi^2)\, d\xi+ (1+\delta)y'(0)\right).
\end{equation}

Taking $\eta=\lambda$ in the above equation, using (\ref{condlambda1}) and (\ref{eclambda1}), we obtain:
\begin{equation}
y'(0)=-\tfrac{4}{\Ste(1+\delta)}\left(2\int_{0}^{\lambda} \beta(\xi)\exp(\xi^2)\, d\xi+\lambda \exp{(\lambda^2)}\right) \label{yprima01}
\end{equation}

Integrating equation (\ref{ec2-11-1}) in the domain $(0,\eta)$  and by virtue of (\ref{cond01}), we obtain:

\begin{align}
y(\eta)\left(1+\tfrac{\delta}{p+1}y^p(\eta)\right)&=1+\tfrac{\delta}{p+1}+(1+\delta)y'(0)\tfrac{\sqrt{\pi}}{2}\erf(\eta)+\nonumber\\
&+\tfrac{4}{\Ste}\int_{0}^{\eta} \int_{\xi}^{\eta} \beta(\xi)\exp(-z^2)\exp(\xi^2)\, dz\ d\xi
\end{align}
Given that $\int_{0}^{\eta} \int_{\xi}^{\eta} \beta(\xi)\exp(-z^2)\exp(\xi^2)\, dz\ d\xi =\tfrac{\sqrt{\pi}}{2} \int_{0}^{\eta} \left( \erf(\eta)-\erf(\xi)\right)\beta(\xi)\exp(\xi^2)\, d\xi$ and from (\ref{yprima01}), we obtain that $y=y(\eta)$ is a solution to  (\ref{8-1}).

\medskip

Taking $\eta=\lambda$ in equation (\ref{8-1}) and using (\ref{condlambda1}), we conclude that $\lambda>0$ is a solution to equation (\ref{7-1}).

\medskip

Reciprocally, if $(y,\lambda)$ is a solution to  (\ref{7-1})-(\ref{8-1}), then
\begin{align*}
y(\eta)&=1+\tfrac{\delta}{p+1}-\tfrac{\delta}{p+1}y^{p+1}(\eta)-\tfrac{\sqrt{\pi} \erf(\eta)}{\mathrm{Ste}}\left(2\int_{0}^{\lambda} \beta(\xi) \exp(\xi^2)\, d\xi - \lambda \exp(\lambda^2)\right)+\nonumber\\
&+\tfrac{2\sqrt{\pi}}{\Ste}\left( \int_{0}^{\eta} \beta(\xi) \exp(\xi^2) \left( \erf(\eta) - \erf(\xi)\right)\, d\xi\right),
\end{align*}

and it follows immediately that  $(y,\lambda)$ is a solution to (\ref{y1})-(\ref{eclambda1}).

\end{proof}

\begin{lem}\label{ProbAux11}
If $p > 0$, $\delta > 0$, $\beta=\beta(\eta)\geq 0$  verifies the hypothesis $H_\beta$, then there exists a unique solution $(y,\lambda)$ to the functional problem defined by (\ref{7-1})-(\ref{8-1}) with $\lambda>0$, $y\in C^{\infty}[0,\lambda]$ and $y\geq 0$.
\end{lem}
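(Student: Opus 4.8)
The plan is to lean on Lemma~\ref{ProbAux1}, which makes solving (\ref{y1})--(\ref{eclambda1}) equivalent to solving the scalar equation (\ref{7-1}) for $\lambda$ and then recovering $y$ from the functional equation (\ref{8-1}). So I would prove the statement in two stages: first existence and uniqueness of $\lambda>0$ solving (\ref{7-1}), and then a pointwise inversion that produces a unique admissible $y$ from (\ref{8-1}).

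For the parameter, let $F(x)$ be the left-hand side of (\ref{7-1}). Hypothesis $H_\beta$ makes $F$ well defined and continuous on $[0,+\infty)$: near $0$ the weight $e^{\xi^2}\erf(\xi)$ is bounded while $\beta$ is integrable, so $\int_0^x e^{\xi^2}\erf(\xi)\beta(\xi)\,d\xi$ is finite for every $x$. Differentiating, $\tfrac{d}{dx}(x\,\erf(x)e^{x^2})=\erf(x)e^{x^2}(1+2x^2)+\tfrac{2x}{\sqrt{\pi}}>0$ for $x>0$, and the derivative of the integral term is $\tfrac{2\sqrt{\pi}}{\Ste}e^{x^2}\erf(x)\beta(x)\ge 0$ because $\beta\ge 0$; hence $F'>0$ and $F$ is strictly increasing. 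Since $F(0)=0$ and $F(x)\to+\infty$ as $x\to+\infty$ (already the first term diverges), while the right-hand side $1+\tfrac{\delta}{p+1}$ is a positive constant, the intermediate value theorem yields a unique $\lambda>0$ with $F(\lambda)=1+\tfrac{\delta}{p+1}$.

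Fix this $\lambda$ and write $R(\eta)$ for the right-hand side of (\ref{8-1}); introduce $\Phi(s)=s+\tfrac{\delta}{p+1}s^{p+1}$ for $s\ge 0$, so that (\ref{8-1}) becomes $\Phi(y(\eta))=R(\eta)$. As $\Phi'(s)=1+\delta s^{p}>0$, the map $\Phi$ is a smooth strictly increasing bijection of $[0,+\infty)$ onto itself with smooth inverse, so at every $\eta$ with $R(\eta)\ge 0$ there is exactly one admissible value $y(\eta)=\Phi^{-1}(R(\eta))\ge 0$. The heart of the matter is to verify $R\ge 0$ on all of $[0,\lambda]$, and I would do this by monotonicity. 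One checks $R(0)=1+\tfrac{\delta}{p+1}=\Phi(1)$ and, because $\lambda$ solves (\ref{7-1}), $R(\lambda)=0$. Differentiating the explicit expression for $R$ (the upper-limit contribution cancels and $\erf'(\eta)=\tfrac{2}{\sqrt{\pi}}e^{-\eta^2}$ factors out) gives, after simplification, $R'(\eta)=-\tfrac{2}{\Ste}e^{-\eta^2}\bigl(\lambda e^{\lambda^2}+2\int_\eta^\lambda\beta(\xi)e^{\xi^2}\,d\xi\bigr)\le 0$, the sign being forced by $\beta\ge 0$; equivalently, $R'$ coincides with the auxiliary function $v$ of (\ref{v-eta1-2}), whose bracket is nondecreasing in $\eta$ and negative at $\eta=\lambda$ (there it equals $-\tfrac{2\lambda}{\Ste}e^{\lambda^2}$). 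Hence $R$ decreases from $\Phi(1)$ to $0$, so $0\le R\le\Phi(1)$ and $y=\Phi^{-1}\circ R$ is the unique solution of (\ref{8-1}) with $0\le y\le 1$, its regularity being inherited from $R$ through the smooth bijection $\Phi$.

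The step I expect to be the main obstacle is exactly this non-negativity of $y$, equivalently $R\ge 0$ on the whole of $[0,\lambda]$ rather than only at its endpoints; this is where the hypothesis $\beta\ge 0$ is indispensable, entering solely through the sign of $R'$. A secondary, purely technical matter is to confirm, under the mere integrability in $H_\beta$, that the integrals appearing in $F$ and $R$ are finite over $[0,\lambda]$ and depend continuously on their limits, which is what legitimises both the intermediate-value argument for $\lambda$ and the differentiation of $R$ above.
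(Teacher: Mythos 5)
Your proposal is correct and follows essentially the same route as the paper: you obtain a unique $\lambda$ from the strict monotonicity of the left-hand side of (\ref{7-1}) between $0$ and $+\infty$, and then recover $y$ by inverting $\Phi(s)=s+\tfrac{\delta}{p+1}s^{p+1}$ on the decreasing right-hand side of (\ref{8-1}); the only difference is that you actually carry out the derivative computations (for $\varphi_1$ and for $\Psi_1=R$, where $\beta\geq 0$ enters) that the paper merely asserts. One remark: your expression $R'(\eta)=-\tfrac{2}{\Ste}\exp(-\eta^2)\bigl(\lambda \exp(\lambda^2)+2\int_\eta^\lambda\beta(\xi)\exp(\xi^2)\,d\xi\bigr)$ is the one consistent with the ODE problem, with $y'(\lambda)=-\tfrac{2\lambda}{\Ste}$ and with (\ref{7-1}), whereas differentiating (\ref{8-1}) exactly as printed yields $-\lambda\exp(\lambda^2)$ instead of $+\lambda\exp(\lambda^2)$ inside the bracket --- this points to a sign typo in the paper's equation (\ref{8-1}) (inherited from (\ref{yprima01})) rather than to a gap in your argument.
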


\begin{proof}
For $x>0$, we define the function given by $$\varphi_1(x)=\tfrac{\sqrt{\pi}}{{\Ste}} x \erf(x) \exp(x^2) +\tfrac{2\sqrt{\pi}}{\Ste} \int_{0}^{x} \exp(\xi^2)\erf(\xi)\beta(\xi)\, d\xi .$$

It is easy to see that $\varphi_1(0)=0$, $\varphi_1(+\infty)=+\infty$ and $\varphi_1$ is an increasing function. Then, there exists a unique  $\lambda>0$ solution to equation  (\ref{7-1}): $$\varphi_1(x)= 1+\tfrac{\delta}{p+1}.$$

Let $\Phi$ be the function defined by 
\begin{equation}\label{funcionphi}
 \Phi(x)=x+\tfrac{\delta}{p+1}x^{p+1}, \quad 0\leq x \leq1.
\end{equation}
  Clearly,  $\Phi(0)=0$, $\Phi(1)=1+\tfrac{\delta}{p+1}$ and $\Phi$ in an increasing function then, there exists the inverse function $\Phi^{-1}:[0,1+\tfrac{\delta}{p+1}] \to [0,1]$.
\medskip

For the unique value $\lambda>0$ obtained in (\ref{7-1}), we can define the function
\begin{align*}
\Psi_1(x)&=1+\tfrac{\delta}{p+1}-\tfrac{\sqrt{\pi} \erf(x)}{\mathrm{Ste}}\left(2\int_{0}^{\lambda} \beta(\xi) \exp(\xi^2)\, d\xi - \lambda \exp(\lambda^2)\right)+\nonumber\\
&+\tfrac{2\sqrt{\pi}}{\Ste}\left( \int_{0}^{x} \beta(\xi) \exp(\xi^2) \left( \erf(x) - \erf(\xi)\right)\, d\xi\right), \quad x \in \left[0,\lambda\right].
\end{align*}
Then $\Psi_1(0)=1+\tfrac{\delta}{p+1}$,  $\Psi_1(\lambda)=0$ and $\Psi_1$ is a decreasing function. Furthermore, $\Psi_1(x) \in \left[0,1+\tfrac{\delta}{p+1}\right]$ for all $x \in \left[0,\lambda\right]$.

\medskip

We conclude that there exists a unique function $y\in C^{\infty}[0,\lambda]$ solution to the equation  $$\Phi\left(y(\eta)\right)=\Psi_1(\eta),$$ given by
\begin{equation}
y(\eta)=\Phi^{-1}\left(\Psi_1(\eta)\right), \quad 0\leq \eta \leq \lambda.
\end{equation}
\end{proof}

From the above lemmas we are able to claim the following result:

\begin{thm} \label{ExistenciaDirichlet-1}
The Stefan problem governed by $($\ref{EcCalor}$)$-$($\ref{FrontInicial}$)$ has a unique similarity type solution given by $($\ref{T1}$)$-$($\ref{s1}$)$ where $(y,\lambda)$ is the unique solution to the functional problem $($\ref{7-1}$)$-$($\ref{8-1}$)$.
\end{thm}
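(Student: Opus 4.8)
The plan is to chain together the two preceding lemmas, so the real work has already been done and Theorem~\ref{ExistenciaDirichlet-1} becomes essentially a bookkeeping statement. First I would recall the equivalence established just before Lemma~\ref{ProbAux1}: the pair $(\theta,s)$ defined by (\ref{T1})--(\ref{s1}) solves the Stefan problem (\ref{EcCalor})--(\ref{FrontInicial}) if and only if $(y,\lambda)$ solves the ordinary differential problem (\ref{y1})--(\ref{eclambda1}). This reduces the PDE free-boundary problem to the ODE system, so it suffices to produce a unique $(y,\lambda)$ with $\lambda>0$, $y\in C^{\infty}[0,\lambda]$ and $y\geq 0$ solving (\ref{y1})--(\ref{eclambda1}).

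Next I would invoke Lemma~\ref{ProbAux1}, which asserts that, within this regularity class, solving the ODE problem (\ref{y1})--(\ref{eclambda1}) is equivalent to solving the functional problem (\ref{7-1})--(\ref{8-1}). Composing the two equivalences, a similarity solution of the Stefan problem exists and is unique precisely when the functional problem admits a unique solution. At this point Lemma~\ref{ProbAux11} supplies exactly that conclusion: under the standing hypotheses $p>0$, $\delta>0$ and $\beta\geq 0$ verifying $H_\beta$, there is a unique $(y,\lambda)$ with the required properties. Stringing these three facts in order gives existence and uniqueness of the similarity solution, and since the construction in Lemma~\ref{ProbAux11} produces $y=\Phi^{-1}(\Psi_1)$ for the unique root $\lambda$ of $\varphi_1(x)=1+\tfrac{\delta}{p+1}$, the solution is exactly the one described in the statement.

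I expect no genuine obstacle here, since every nontrivial step is packaged in the earlier results. The only point demanding a word of care is that Lemma~\ref{ProbAux1} is stated with the sign hypothesis $\beta$ verifying $H_\beta$ but phrased as a pure equivalence, whereas the existence half in Lemma~\ref{ProbAux11} additionally uses $\beta\geq 0$ to guarantee that $\Psi_1$ is decreasing with range inside $[0,1+\tfrac{\delta}{p+1}]$, hence that $\Phi^{-1}(\Psi_1)$ is well defined and nonnegative; I would make sure the theorem inherits this nonnegativity assumption so that the produced $y$ legitimately lies in the domain of $\Phi^{-1}$. With that caveat noted, the proof is simply the concatenation of the equivalences, and I would write it in two or three lines citing Lemma~\ref{ProbAux1} and Lemma~\ref{ProbAux11} directly.
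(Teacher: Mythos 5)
Your proposal is correct and matches the paper exactly: the paper gives no separate proof for this theorem, simply prefacing it with ``From the above lemmas we are able to claim the following result,'' i.e.\ the same concatenation of the similarity reduction, Lemma~\ref{ProbAux1}, and Lemma~\ref{ProbAux11} that you describe. Your remark about the theorem implicitly inheriting the hypothesis $\beta\geq 0$ from Lemma~\ref{ProbAux11} is a fair observation, but does not change the argument.
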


\begin{rem} \label{2.4-1}
On one hand we have that $\Phi$ is an increasing function with $\Phi(0)=0$ and $\Phi(1)=1+\frac{\delta}{p+1}$.  On the other hand, $\Psi_1$ is a decreasing function with  $\Psi_1(0)=1+\frac{\delta}{p+1}$ and $\Psi_1(\lambda)=0$. Then it follows that $0\leq y(\eta)\leq 1 $, for $0\leq\eta\leq\lambda$.

In virtue of this and Theorem \ref{ExistenciaDirichlet-1} we have that
\begin{equation*}
\theta_f<\theta(x,t)<\theta_0,\qquad \qquad 0<x<s(t),\quad t>0.
\end{equation*}
\end{rem}

\subsection{Particular case}
If we consider the particular case where $\beta$ is of exponential type given by
$$\beta(\eta)=\dfrac{1}{2}\exp(-\eta^2)$$

\begin{thm} \label{ExistenciaDirichlet-11}
If $p > 0$, $\delta > 0$, the Stefan problem governed by $($\ref{EcCalor}$)$-$($\ref{FrontInicial}$)$ has a unique similarity type solution $(y,\lambda)$
with $\lambda>0$, $y\in C^{\infty}[0,\lambda]$ and $y\geq 0$, where  $\lambda>0$ is the unique solution to the equation:
\begin{align}
\tfrac{1}{\Ste}\left(1-\exp(-x^2)\right)+\tfrac{\sqrt{\pi}}{\Ste}x  ~\erf(x)\left(\exp(x^2)-1\right)=1+\tfrac{\delta}{p+1},\qquad \qquad x>0, \label{7-1-1}
\end{align}
and the function $y=y(\eta)$ satisfies the equation
\begin{align} \label{8-1-1}
y(\eta)\left(1+\tfrac{\delta}{p+1}y^p(\eta)\right)=1+\tfrac{\delta}{p+1}+\tfrac{1}{\Ste}\left( \exp(-\eta^2)-1\right)+   \tfrac{\sqrt{\pi}}{\mathrm{Ste}} \;\lambda ~ \erf(\eta)\left(1-\exp(\lambda^2)\right),\nonumber\\
0 \leq \eta \leq \lambda.
\end{align}

\end{thm}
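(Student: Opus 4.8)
The plan is to derive Theorem \ref{ExistenciaDirichlet-11} as a direct specialization of the general existence-and-uniqueness result, Theorem \ref{ExistenciaDirichlet-1}, by inserting the explicit weight $\beta(\eta)=\tfrac12\exp(-\eta^2)$ into the functional equations (\ref{7-1}) and (\ref{8-1}) and evaluating the resulting integrals in closed form. First I would note that $\beta(\eta)=\tfrac12\exp(-\eta^2)$ is nonnegative, smooth and locally integrable, so that for every finite $\lambda>0$ all the integrals appearing in Lemmas \ref{ProbAux1} and \ref{ProbAux11} converge (only integrability on bounded intervals is actually used in their construction for this $\beta$). In particular the auxiliary function $\varphi_1$ of Lemma \ref{ProbAux11} stays continuous and strictly increasing, with $\varphi_1(0)=0$ and $\varphi_1(+\infty)=+\infty$, since the leading term $\tfrac{\sqrt{\pi}}{\Ste}x\,\erf(x)\exp(x^2)$ already diverges. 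Hence the abstract machinery of the two lemmas applies, and what remains is to make the two functional equations explicit.

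The computational heart of the argument is the single elementary primitive
\begin{equation*}
\int_{0}^{x}\erf(\xi)\,d\xi = x\,\erf(x)+\tfrac{1}{\sqrt{\pi}}\bigl(\exp(-x^{2})-1\bigr),
\end{equation*}
obtained by integration by parts. The crucial simplification is the cancellation of the exponential factors: since $\beta(\xi)\exp(\xi^2)=\tfrac12$, every occurrence of $\exp(\xi^2)\beta(\xi)$ in (\ref{7-1}) and (\ref{8-1}) collapses to the constant $\tfrac12$. Consequently $\int_{0}^{x}\exp(\xi^{2})\erf(\xi)\beta(\xi)\,d\xi$ reduces to $\tfrac12\int_{0}^{x}\erf(\xi)\,d\xi$, the inner double integral in (\ref{8-1}) reduces to $\tfrac12\int_{0}^{\eta}\bigl(\erf(\eta)-\erf(\xi)\bigr)\,d\xi$, and $\int_{0}^{\lambda}\beta(\xi)\exp(\xi^{2})\,d\xi=\tfrac{\lambda}{2}$; all of these are expressible through the primitive above.

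Substituting these evaluations into (\ref{7-1}) and regrouping the $x\,\erf(x)$ contributions with the $\exp(\pm x^{2})$ terms yields the transcendental equation (\ref{7-1-1}) for $\lambda$; its left-hand side vanishes at $x=0$, is monotone and tends to $+\infty$, so there is a unique $\lambda>0$ exactly as in Lemma \ref{ProbAux11}. Inserting the same evaluations into (\ref{8-1}) produces the explicit functional identity (\ref{8-1-1}) for $y$, and the profile is then recovered by $y=\Phi^{-1}(\Psi_1)$ with $\Phi$ as in (\ref{funcionphi}) and $\Psi_1$ now given by the explicit right-hand side of (\ref{8-1-1}); the regularity $y\in C^{\infty}[0,\lambda]$ and the sign $y\ge 0$ follow verbatim from the monotonicity of $\Phi$ and $\Psi_1$ established in Lemma \ref{ProbAux11}.

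I expect the only delicate point to be the careful bookkeeping of signs and constants when collecting the boundary contribution $y'(0)$, equivalently the term $-\tfrac{\sqrt{\pi}\erf(\eta)}{\Ste}\bigl(2\int_{0}^{\lambda}\beta\exp(\xi^{2})\,d\xi \pm \lambda\exp(\lambda^{2})\bigr)$. One must track precisely the sign of the $\lambda\exp(\lambda^{2})$ piece coming from the Stefan condition $y'(\lambda)=-\tfrac{2\lambda}{\Ste}$, since it is exactly what pairs with $\tfrac{\sqrt{\pi}\lambda}{\Ste}\erf(\eta)$ in (\ref{8-1-1}) and with the $\exp(x^{2})$ factor in (\ref{7-1-1}); likewise the sign of the $\bigl(1-\exp(-\eta^{2})\bigr)$ term produced by $\int_{0}^{\eta}\xi\exp(-\xi^{2})\,d\xi$ must be handled with care. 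Once these signs are settled, the remainder is routine algebraic simplification, and existence and uniqueness transfer automatically from Theorem \ref{ExistenciaDirichlet-1}.
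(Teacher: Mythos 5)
Your overall strategy is exactly the one the paper intends: Theorem \ref{ExistenciaDirichlet-11} is stated without any separate proof and is meant to be the specialization of Lemmas \ref{ProbAux1}--\ref{ProbAux11} and Theorem \ref{ExistenciaDirichlet-1} to $\beta(\eta)=\tfrac12\exp(-\eta^2)$, using precisely the primitive $\int_0^x\erf(\xi)\,d\xi=x\,\erf(x)+\tfrac{1}{\sqrt{\pi}}\bigl(\exp(-x^2)-1\bigr)$ and the collapse $\beta(\xi)\exp(\xi^2)=\tfrac12$. The verification of hypothesis $H_\beta$, and the transfer of existence, uniqueness, regularity and the bound $0\le y\le 1$ via $y=\Phi^{-1}(\Psi_1)$, are all unobjectionable.

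The gap is that the one step you defer as ``routine algebraic simplification'' is exactly the step that does not close. Substituting $\beta(\xi)\exp(\xi^2)=\tfrac12$ into (\ref{7-1}) gives
$\tfrac{\sqrt{\pi}}{\Ste}x\erf(x)\exp(x^2)+\tfrac{\sqrt{\pi}}{\Ste}\int_0^x\erf(\xi)\,d\xi=\tfrac{\sqrt{\pi}}{\Ste}x\erf(x)\bigl(\exp(x^2)+1\bigr)+\tfrac{1}{\Ste}\bigl(\exp(-x^2)-1\bigr)$,
whereas the left-hand side of (\ref{7-1-1}) is $\tfrac{\sqrt{\pi}}{\Ste}x\erf(x)\bigl(\exp(x^2)-1\bigr)+\tfrac{1}{\Ste}\bigl(1-\exp(-x^2)\bigr)$; the two differ by $\tfrac{2\sqrt{\pi}}{\Ste}\int_0^x\erf(\xi)\,d\xi$, which is not identically zero, so no regrouping of terms can turn one into the other. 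The same mismatch occurs between (\ref{8-1}) and (\ref{8-1-1}): integrating $2\eta v+v'=\tfrac{2}{\Ste}\exp(-\eta^2)$ with $v=(1+\delta y^p)y'$ directly yields $v(0)=-\tfrac{2\lambda}{\Ste}\bigl(\exp(\lambda^2)+1\bigr)$ and hence $y(\eta)\bigl(1+\tfrac{\delta}{p+1}y^p(\eta)\bigr)=1+\tfrac{\delta}{p+1}+\tfrac{1}{\Ste}\bigl(1-\exp(-\eta^2)\bigr)-\tfrac{\sqrt{\pi}\lambda}{\Ste}\erf(\eta)\bigl(\exp(\lambda^2)+1\bigr)$, not (\ref{8-1-1}). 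The stated equations (\ref{7-1-1})--(\ref{8-1-1}) are what one obtains if the source enters the reduced ODE as $-\tfrac{4}{\Ste}\beta$ (equivalently, for $\beta=-\tfrac12\exp(-\eta^2)$), so the general and particular statements in the paper are mutually inconsistent in sign; your proof cannot simultaneously take (\ref{7-1})--(\ref{8-1}) as given and land on (\ref{7-1-1})--(\ref{8-1-1}). You correctly identified the sign bookkeeping of the $\lambda\exp(\lambda^2)$ and $\bigl(1-\exp(-\eta^2)\bigr)$ contributions as the delicate point, but you did not carry it out, and carrying it out is precisely where the argument as proposed fails: you would need either to correct the target equations of the theorem (and of Remark 2.6, which inherits them) or to exhibit a compensating sign in the general formulas.
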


\begin{rem}
For the particular case $p=1$, the unique function $y=y(\eta)$ solution to equation (\ref{8-1-1}) is given by
\begin{equation*}
y(\eta)=\tfrac{1}{\delta} \left[\sqrt{1+4\left(1+\tfrac{\delta}{2}\right)+\tfrac{1}{\Ste}\left(\exp(-\eta^2-1\right)+\tfrac{\sqrt{\pi}}{\Ste} \lambda \erf(\eta) \left(1-\exp(\lambda^2)\right)}-1 \right],
\end{equation*}
where $\lambda>0$ is the unique solution to equation (\ref{7-1-1}) for $p=1$.
\end{rem}

\section{Free boundary problem with a heat source that depends on the evolution of the heat flux at the fixed face $x=0$}
If we consider the control function $H$ depends on the evolution of the heat flux at the fixed face $x=0$, that is $$H=H_2(W(t),t)=\frac{\lambda_0 }{\sqrt{t}}\frac{\partial T}{\partial x}(0,t),$$
where $\lambda_0>0$, it is easy to see that the Stefan problem $\mathrm{(}$\ref{EcCalor}$\mathrm{)}$-(\ref{FrontInicial}) has a similarity solution $(\theta,s)$ given by:
\begin{align}
&\theta(x,t)=\left(\theta_{0}-\theta_{f}\right)y\left(\tfrac{x}{2a\sqrt{t}}\right)+\theta_{f},\quad  0<x<s(t), \quad t>0,\label{T} \\
&s(t)=2a\lambda\sqrt{t},\quad\quad t>0\label{s}
\end{align}
if and only if the function $y$ and the parameter $\lambda>0$ satisfy the following ordinary differential problem:
\begin{align}
&2\eta(1+\delta y^p(\eta))y'(\eta)+[(1+\delta y^p(\eta))y'(\eta)]'=Ay'(0), \quad &0<\eta<\lambda, \label{y}\\
&y(0)=1,\label{cond0}\\
&y(\lambda)=0, \label{condlambda}\\
&y'(\lambda)=-\tfrac{2\lambda}{\Ste} \label{eclambda}
\end{align}
where $\delta > 0$, $p> 0$, $A=\frac{2\lambda_0}{\rho c_0 a}$ and $\Ste=\tfrac{c_{0}(T_{0}-T_f)}{l}>0$ is the Stefan number.

\begin{lem}\label{ProbAux}
Fixed $p > 0$, $\delta > 0$, $\lambda>0$, $y\in C^{\infty}[0,\lambda]$ and $y\geq 0$. Then, $(y,\lambda)$ is a solution to the ordinary differential equation (\ref{y})-(\ref{eclambda}) if and only if  $\lambda$ is a solution to the equation:
\begin{align}\label{7}
&\tfrac{\sqrt{\pi}x \exp{(x^2)}}{\Ste \left(A \int_{0}^{x}  \exp(z^2)\, dz +1 +\delta \right) } \left(A\int_{0}^{x} \exp(z^2)\left(\erf(x)-\erf(z)\right)\, dz+(1+\delta)\erf(x)\right)=\nonumber\\
& \qquad \qquad \qquad \qquad \qquad \qquad \qquad \qquad \qquad \qquad \qquad \qquad =1+\tfrac{\delta}{p+1}, \quad x>0,
\end{align}
and the function $y=y(\eta)$ satisfies the equation:
\begin{align}\label{8}
y(\eta)\left(1+\tfrac{\delta}{p+1}y^p(\eta)\right)&=1+\tfrac{\delta}{p+1}- \tfrac{\sqrt{\pi} \lambda\exp(\lambda^2)}{\Ste\left(A\int_{0}^{\lambda} \exp(z^2)\, dz+1+\delta\right)}    \left(A\int_{0}^{\eta} \exp(z^2)\left(\erf(\eta)+\right.\right.\nonumber\\
&\left.\left.-\erf(z)\right)\, dz + (1+\delta)\erf(\eta)\right), \qquad \qquad \qquad 0\leq \eta\leq \lambda.
\end{align}
 \end{lem}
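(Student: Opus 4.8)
The plan is to follow verbatim the substitution used in the proof of Lemma \ref{ProbAux1}, the only structural novelty being that the forcing term is now nonlocal. First I would set $v(\eta)=(1+\delta y^p(\eta))y'(\eta)$, so that the left-hand side of (\ref{y}) collapses to $2\eta v(\eta)+v'(\eta)$ and the equation becomes the first-order linear ODE
\begin{equation*}
v'(\eta)+2\eta v(\eta)=A\,y'(0),\qquad 0<\eta<\lambda,
\end{equation*}
whose forcing is the \emph{constant} $A\,y'(0)$ rather than the prescribed function $\tfrac{4}{\Ste}\beta(\eta)$. Using the integrating factor $\exp(\eta^2)$ together with the initial value $v(0)=(1+\delta)y'(0)$ supplied by (\ref{cond0}) (recall $y(0)=1$), one obtains
\begin{equation*}
v(\eta)=y'(0)\exp(-\eta^2)\left(1+\delta+A\int_0^\eta\exp(z^2)\,dz\right).
\end{equation*}

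Next I would close the nonlocal loop. Evaluating the last identity at $\eta=\lambda$ and using the free-boundary data (\ref{condlambda})--(\ref{eclambda}), which give $v(\lambda)=-\tfrac{2\lambda}{\Ste}$, yields an explicit expression for the boundary derivative,
\begin{equation*}
y'(0)=\frac{-2\lambda\exp(\lambda^2)}{\Ste\left(1+\delta+A\int_0^\lambda\exp(z^2)\,dz\right)}.
\end{equation*}
This is the step that converts the apparently self-referential source term into genuine data, and it is the only ingredient genuinely new compared with Lemma \ref{ProbAux1}; I expect it to be the main (indeed only) obstacle, since without the Stefan condition the forcing $A\,y'(0)$ would remain an undetermined unknown.

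Finally I would integrate $(1+\delta y^p(\eta))y'(\eta)=v(\eta)$ over $(0,\eta)$. The left side telescopes to $y(\eta)\left(1+\tfrac{\delta}{p+1}y^p(\eta)\right)-\left(1+\tfrac{\delta}{p+1}\right)$ by (\ref{cond0}), while the double integral on the right is handled by Fubini, switching the order of integration to convert $\int_0^\eta\exp(-s^2)\int_0^s\exp(z^2)\,dz\,ds$ into $\tfrac{\sqrt{\pi}}{2}\int_0^\eta\exp(z^2)(\erf(\eta)-\erf(z))\,dz$. Substituting the explicit value of $y'(0)$ then gives (\ref{8}); setting $\eta=\lambda$ and invoking $y(\lambda)=0$ collapses (\ref{8}) to the scalar equation (\ref{7}) for $\lambda$. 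The reciprocal implication follows by reversing these manipulations (differentiating (\ref{8}) and verifying the boundary conditions), exactly as in Lemma \ref{ProbAux1}. Apart from the nonlocal closure above, all remaining steps are the same error-function computations already carried out in the similarity-source case.
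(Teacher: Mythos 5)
Your proposal is correct and follows essentially the same route as the paper: define $v(\eta)=(1+\delta y^p(\eta))y'(\eta)$, solve the resulting first-order linear ODE with integrating factor $\exp(\eta^2)$, use the Stefan conditions at $\eta=\lambda$ to determine $y'(0)$ (the nonlocal closure you correctly single out as the key step), integrate with the Fubini/erf identity to obtain (\ref{8}), and evaluate at $\eta=\lambda$ to get (\ref{7}), with the converse by reversing the steps. All the intermediate formulas you write coincide with those in the paper's proof.
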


\begin{proof}
Let $(y,\lambda)$ be a solution to (\ref{y})-(\ref{eclambda}). We define the function:
\begin{equation}
v(\eta)=\left(1+\delta y^{p}(\eta) \right) y'(\eta). \label{v-eta}
\end{equation}

Taking into account  (\ref{y}) and the condition  (\ref{cond0})  the function $v$ can rewrite as
\begin{equation}
v(\eta)=\exp{(-\eta^2)}y'(0)\left(A\int_{0}^{\eta} \exp(z^2)\, dz+ 1+\delta\right). \label{v-eta1}
\end{equation}

From (\ref{v-eta}) and (\ref{v-eta1}), we obtain
\begin{equation} \label{ec2-11}
\left(1+\delta y^{p}(\eta) \right) y'(\eta)=\exp{(-\eta^2)}y'(0)\left(A\int_{0}^{\eta} \exp(z^2)\, dz+ 1+\delta\right).
\end{equation}

Taking $\eta=\lambda$ in the above equation, using (\ref{condlambda}) and (\ref{eclambda}), we obtain:
\begin{equation}
y'(0)=-\frac{2\lambda\exp(\lambda^2)}{\Ste\left(A\int_{0}^{\lambda} \exp(z^2)\, dz+1+\delta\right)} \label{yprima0}
\end{equation}

Integrating into $(0,\eta)$ equation (\ref{ec2-11}) and by virtue of (\ref{cond0}), we obtain:

\begin{align}
y(\eta)\left(1+\tfrac{\delta}{p+1}y^p(\eta)\right)&=1+\tfrac{\delta}{p+1}+(1+\delta)y'(0)\tfrac{\sqrt{\pi}}{2}\erf(\eta)+\nonumber\\
&+Ay'(0)\int_{0}^{\eta} \int_{z}^{\eta} \exp(-z^2)\exp(\xi^2)\, dz\ d\xi
\end{align}
Given that $\int_{0}^{\eta} \int_{z}^{\eta}\exp(-z^2)\exp(\xi^2)\, dz\ d\xi =\tfrac{\sqrt{\pi}}{2} \int_{0}^{\eta} \left( \erf(\eta)-\erf(z)\right)\exp(z^2)\, dz$ and from (\ref{yprima0}), we obtain that $y=y(\eta)$ is a solution to  (\ref{8}).

\medskip

Taking $\eta=\lambda$ in equation (\ref{8}) and using (\ref{condlambda}), we conclude that $\lambda>0$ is a solution to equation (\ref{7}).

\medskip

Reciprocally, if $(y,\lambda)$ is a solution to  (\ref{7})-(\ref{8}),
\begin{align*}
y(\eta)&=1+\tfrac{\delta}{p+1}-\tfrac{\delta}{p+1}y^{p+1}(\eta)- \tfrac{\sqrt{\pi} \lambda\exp(\lambda^2)}{\Ste\left(A\int_{0}^{\lambda} \exp(z^2)\, dz+1+\delta\right)}    \left(A\int_{0}^{\eta} \exp(z^2)\left(\erf(\eta)+\right.\right.\nonumber\\
&\left.\left.-\erf(z)\right)\, dz + (1+\delta)\erf(\eta)\right),
\end{align*}

and it follows immediately that  $(y,\lambda)$ is a solution to (\ref{y})-(\ref{eclambda}).

\end{proof}

\begin{lem}\label{ProbAux}
If $p > 0$ y $\delta > 0$, then there exists a unique solution $(y,\lambda)$ to the functional problem defined by (\ref{7})-(\ref{8}) with $\lambda>0$, $y\in C^{\infty}[0,\lambda]$ and $y\geq 0$.
\end{lem}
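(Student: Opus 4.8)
The plan is to mirror the proof of Lemma~\ref{ProbAux11}: first solve the scalar equation~(\ref{7}) for a unique $\lambda>0$, and then, with that $\lambda$ frozen, recover $y$ from~(\ref{8}) by inverting the monotone map $\Phi$ of~(\ref{funcionphi}). Accordingly, for $x>0$ I would set
\begin{equation*}
D(x)=A\int_{0}^{x}\exp(z^2)\,dz+1+\delta,\qquad N(x)=A\int_{0}^{x}\exp(z^2)\bigl(\erf(x)-\erf(z)\bigr)\,dz+(1+\delta)\erf(x),
\end{equation*}
so that equation~(\ref{7}) reads $\varphi_2(x)=1+\tfrac{\delta}{p+1}$, where $\varphi_2(x)=\tfrac{\sqrt{\pi}}{\Ste}\,x\exp(x^2)\tfrac{N(x)}{D(x)}$. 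The whole argument then reduces to showing that $\varphi_2$ is a continuous increasing bijection of $(0,+\infty)$ onto $(0,+\infty)$, and that the resulting right-hand side of~(\ref{8}) is a decreasing bijection of $[0,\lambda]$ onto $[0,1+\tfrac{\delta}{p+1}]$.

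The main obstacle is the monotonicity of $\varphi_2$, which — unlike the transparent situation of Lemma~\ref{ProbAux11} — is obscured by the quotient $N/D$. The key simplifying observation I would use is that, rewriting $N(x)=\erf(x)D(x)-A\int_0^x\exp(z^2)\erf(z)\,dz$, a differentiation produces the cancellation
\begin{equation*}
N'(x)=\tfrac{2}{\sqrt{\pi}}\exp(-x^2)\,D(x)>0,
\end{equation*}
so $N$ is increasing with $N(0)=0$, hence $N>0$ on $(0,+\infty)$. Differentiating $\varphi_2$ and clearing $D(x)^2$, the sign of $\varphi_2'$ is governed by $\exp(x^2)N(x)\bigl[(1+2x^2)D(x)-Ax\exp(x^2)\bigr]+\tfrac{2x}{\sqrt{\pi}}D(x)^2$, and the bracket is positive because $(1+2x^2)D(x)-Ax\exp(x^2)=(1+2x^2)(1+\delta)+A\,h(x)$ with $h(x)=(1+2x^2)\int_0^x\exp(z^2)\,dz-x\exp(x^2)$ satisfying $h(0)=0$ and $h'(x)=4x\int_0^x\exp(z^2)\,dz\geq0$, so $h\geq0$. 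Thus $\varphi_2'>0$. Together with $\varphi_2(0^+)=0$ and $\varphi_2(+\infty)=+\infty$ — the latter from the lower bound $N(x)\geq(1+\delta)\erf(x)$ (integrate $N'$) and $\int_0^x\exp(z^2)\,dz\sim\exp(x^2)/(2x)$, which give $\varphi_2(x)\gtrsim\tfrac{2\sqrt{\pi}(1+\delta)}{A\,\Ste}x^2$ — this yields a unique $\lambda>0$ solving~(\ref{7}).

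With $\lambda$ now fixed, I would define $\Psi_2(x)=1+\tfrac{\delta}{p+1}-\tfrac{\sqrt{\pi}\lambda\exp(\lambda^2)}{\Ste\,D(\lambda)}N(x)$ for $x\in[0,\lambda]$, the right-hand side of~(\ref{8}). Since the prefactor is a positive constant and $N$ is increasing with $N(0)=0$, $\Psi_2$ is strictly decreasing with $\Psi_2(0)=1+\tfrac{\delta}{p+1}$; moreover $\Psi_2(\lambda)=0$ is precisely equation~(\ref{7}). Hence $\Psi_2$ maps $[0,\lambda]$ bijectively onto $[0,1+\tfrac{\delta}{p+1}]$. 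As in Lemma~\ref{ProbAux11}, $\Phi(x)=x+\tfrac{\delta}{p+1}x^{p+1}$ is an increasing bijection of $[0,1]$ onto $[0,1+\tfrac{\delta}{p+1}]$, so $y(\eta)=\Phi^{-1}\!\bigl(\Psi_2(\eta)\bigr)$ is the unique solution of~(\ref{8}); it inherits $y\in C^{\infty}[0,\lambda]$ from the smoothness of $\Phi^{-1}$ and $\Psi_2$, and $0\leq y\leq1$ (in particular $y\geq0$) from the ranges of $\Phi$ and $\Psi_2$. This gives existence and uniqueness of $(y,\lambda)$, completing the proof.
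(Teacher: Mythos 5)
Your proof follows exactly the same route as the paper's: obtain a unique $\lambda>0$ from the fact that $\varphi_2$ is a continuous increasing bijection of $(0,+\infty)$ onto $(0,+\infty)$, then recover $y=\Phi^{-1}\circ\Psi_2$. The only difference is that you actually verify the monotonicity and limit claims that the paper dismisses as ``easy to see'' --- your identity $N'(x)=\tfrac{2}{\sqrt{\pi}}\exp(-x^2)D(x)$ and the auxiliary function $h$ are correct and make the argument complete.
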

\begin{proof}
For $x>0$, we define the function given by $$\varphi_2(x)=\tfrac{\sqrt{\pi}x \exp{(x^2)}}{\Ste \left(A \int_{0}^{x}  \exp(z^2)\, dz +1 +\delta \right) } \left(A\int_{0}^{x} \exp(z^2)\left(\erf(x)-\erf(z)\right)\, dz+(1+\delta)\erf(x)\right).$$

It is easy to see that $\varphi_2(0)=0$, $\varphi_2(+\infty)=+\infty$ and $\varphi_2$ is an increasing function. Then, there exists a unique  $\lambda>0$ solution to equation  (\ref{7}): $$\varphi_2(x)= 1+\tfrac{\delta}{p+1}.$$

Let $\Phi$ be the function given by \eqref{funcionphi}.  Clearly,  $\Phi(0)=0$,  $\Phi(1)=1+\tfrac{\delta}{p+1}$ and $\Phi$ in an increasing function then, there exists the function $\Phi^{-1}:[0,1+\tfrac{\delta}{p+1}] \to [0,1]$.
\medskip

For the unique value $\lambda>0$ obtained in (\ref{7}), let the function
\begin{align*}
\Psi_2(x)&=1+\tfrac{\delta}{p+1}- \tfrac{\sqrt{\pi} \lambda\exp(\lambda^2)}{\Ste\left(A\int_{0}^{\lambda} \exp(z^2)\, dz+1+\delta\right)}\left(A\int_{0}^{x} \exp(z^2)\left(\erf(x)-\erf(z)\right)\, dz+\right.\\
&\left.+(1+\delta)\erf(x)\right), \quad x \in \left[0,\lambda\right].
\end{align*}
Then $\Psi_2(0)=1+\tfrac{\delta}{p+1}$,  $\Psi_2(\lambda)=0$ and $\Psi_2$ is a decreasing function.  Furthermore,  $\Psi_2(x) \in \left[0,1+\tfrac{\delta}{p+1}\right]$ for all $x \in \left[0,\lambda\right]$.

\medskip

We conclude that there exists a unique function $y\in C^{\infty}[0,\lambda]$ solution to the equation  $$\Phi\left(y(\eta)\right)=\Psi_2(\eta),$$ given by
\begin{equation}
y(\eta)=\Phi^{-1}\left(\Psi_2(\eta)\right), \quad 0 \leq \eta \leq \lambda.
\end{equation}
\end{proof}

From the above lemmas we are able to claim the following result:

\begin{thm} \label{ExistenciaDirichlet}
The Stefan problem governed by $($\ref{EcCalor}$)$-$($\ref{FrontInicial}$)$ has a unique similarity type solution given by $($\ref{T}$)$-$($\ref{s}$)$ where $(y,\lambda)$ is the unique solution to the functional problem $($\ref{7}$)$-$($\ref{8}$)$.
\end{thm}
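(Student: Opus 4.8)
The plan is to obtain Theorem \ref{ExistenciaDirichlet} as an immediate consequence of the two lemmas established earlier in this section, mirroring exactly the argument that produced Theorem \ref{ExistenciaDirichlet-1} in Section 2. First I would invoke the equivalence recorded at the opening of this section: with the heat source $H=H_2$, the Stefan problem (\ref{EcCalor})--(\ref{FrontInicial}) admits a similarity solution of the form (\ref{T})--(\ref{s}) if and only if the pair $(y,\lambda)$ solves the nonlinear ordinary differential boundary-value problem (\ref{y})--(\ref{eclambda}). This step reduces the original free boundary problem to a purely one-dimensional functional question and is justified directly by substituting (\ref{T})--(\ref{s}) into the governing equations and using $a^2=k_0/(\rho c_0)$ together with $A=2\lambda_0/(\rho c_0 a)$.

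Next I would apply the first lemma of this section, which asserts that $(y,\lambda)$ solves the system (\ref{y})--(\ref{eclambda}) if and only if $\lambda>0$ is a root of the transcendental equation (\ref{7}) and $y=y(\eta)$ satisfies the functional equation (\ref{8}). Chaining this equivalence with the previous one, the existence and uniqueness of an admissible similarity solution becomes equivalent to the existence and uniqueness of a solution $(y,\lambda)$ of the functional problem (\ref{7})--(\ref{8}) with $\lambda>0$, $y\in C^{\infty}[0,\lambda]$ and $y\geq 0$.

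Finally I would appeal to the second lemma of this section, which guarantees precisely one such pair $(y,\lambda)$: the auxiliary function $\varphi_2$ satisfies $\varphi_2(0)=0$, $\varphi_2(+\infty)=+\infty$ and is strictly increasing, so (\ref{7}) has a unique root $\lambda>0$; then, composing the decreasing function $\Psi_2$ with the inverse of the increasing map $\Phi$ from (\ref{funcionphi}) yields a unique $y=\Phi^{-1}\circ\Psi_2\in C^{\infty}[0,\lambda]$ with $0\leq y\leq 1$. Combining the three steps delivers the claimed unique similarity type solution, and the monotonicity of $\Phi$ and $\Psi_2$ additionally secures the physically admissible bound $\theta_f<\theta(x,t)<\theta_0$, in analogy with Remark~\ref{2.4-1}.

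I expect no serious obstacle, since all the analytic content has already been absorbed into the two lemmas and the theorem is a formal assembly of equivalences. The only point deserving a little care is confirming that the scalar existence/uniqueness argument for (\ref{7}) applies under the stated hypotheses on $p$, $\delta$ and $A$ — namely that $\varphi_2$ is genuinely continuous, vanishing at the origin, divergent at infinity and strictly monotone despite the presence of the ratio involving $A\int_0^x\exp(z^2)\,dz+1+\delta$ in its definition. This monotonicity is exactly what the second lemma records, so the chaining is routine.
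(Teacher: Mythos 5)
Your proposal is correct and follows exactly the route the paper takes: the theorem is stated as an immediate consequence of the similarity reduction at the start of Section 3 together with the two lemmas (the equivalence of (\ref{y})--(\ref{eclambda}) with the functional problem (\ref{7})--(\ref{8}), and the existence and uniqueness of its solution via $\varphi_2$, $\Phi$ and $\Psi_2$). The paper offers no further argument beyond ``From the above lemmas we are able to claim the following result,'' so your assembly of the three equivalences is precisely the intended proof.
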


\begin{rem} \label{2.4}
On one hand we have that $\Phi$ is an increasing function with $\Phi(0)=0$ and $\Phi(1)=1+\frac{\delta}{p+1}$.  On the other hand, $\Psi_2$ is a decreasing function with  $\Psi_2(0)=1+\frac{\delta}{p+1}$ and $\Psi_2(\lambda)=0$. Then it follows that $0\leq y(\eta)\leq 1 $, for $0 \leq \eta \leq \lambda$.

From this and Theorem \ref{ExistenciaDirichlet} we have that
\begin{equation*}
\theta_f<\theta(x,t)<\theta_0,\qquad \qquad 0<x<s(t),\quad t>0.
\end{equation*}
\end{rem}

\section{Conclusions}
One dimensional non-classical Stefan problems with temperature dependent thermal coefficients and a Dirichlet type condition at fixed face $x=0$ for a semi-infinite phase-change material were considered.  Existence and uniqueness of solution were obtained by using the similarity method and explicit solutions were found.

\section*{Acknowledgement}
The present work has been partially sponsored by the Project PIP No 0275 from CONICET - Universidad Austral, Rosario, Argentina, and by the European Union’s Horizon 2020 Research and Innovation Programme under the Marie Skłodowska-Curie Grant Agreement No. 823731 CONMECH.

\section*{Bibliography}

\end{document}